\newtheorem{teo}{Theorem}[section]
\newtheorem{prop}[teo]{Proposition}
\newtheorem{lm}[teo]{Lemma}
\newtheorem{coro}[teo]{Corollary}
\newtheorem{rem}[teo]{Remark}
\newcommand{\RR}{{\mathbb{R}}}
\newcommand{\inspol}{{\mathcal{A}}}
\newcommand{\der}{\partial}
\newcommand{\ep}{\varepsilon}
\newcommand{\om}{\Omega}
\newcommand{\dive}{\text{\normalfont div}}
\def\qed{\hfill$\square$\vspace{0.5cm}}    
\numberwithin{equation}{section}
\begin{document}

\title[Global Lipschitz stability for polygonal inclusions]{Global Lipschitz stability estimates for polygonal conductivity inclusions from boundary measurements }

\author[E.~Beretta et al.]{Elena~Beretta}
\address{Dipartimento di Matematica ``Brioschi'',
Politecnico di Milano \& New York University Abu Dhabi}
\email{eb147@nyu.edu}
\author[]{Elisa~Francini}
\address{Dipartimento di Matematica e Informatica ``U. Dini'',
Universit\`{a} di Firenze}
\email{elisa.francini@unifi.it}


\keywords{polygonal inclusions, conductivity equation, stability, inverse problems}

\subjclass[2010]{35R30, 35J25}
\begin{abstract}
We derive Lipschitz stability estimates for the Hausdorff distance of polygonal conductivity inclusions in terms of the Dirichlet-to-Neumann map.
\end{abstract}

\maketitle

%
%
\section{Introduction}
In this paper we establish Lipschitz stability estimates  for a certain class of discontinuous conductivities $\gamma$  in terms of the Dirichlet-to-Neumann map.\\
More precisely,  we consider the following boundary value problem
\begin{equation}\label{conductivity}
    \left\{\begin{array}{rcl}
             \textrm{ div }((1+(k-1)\chi_{\mathcal{P}})\nabla u) & = & 0\mbox{ in }\om\subset\mathbb{R}^2, \\
             u & = & \phi \mbox{ on }\der\om,
           \end{array}
    \right.
\end{equation}
where $\phi\in H^{1/2}\left(\der\om\right)$,  $\mathcal{P}$ is a polygonal inclusion strictly contained in a planar, bounded domain $\Omega$ and $k\neq 1$ is a given, positive constant. \\Our goal is  to determine the polygon $\mathcal{P}$ from the knowledge of the Dirichlet-to-Neumann map
\begin{equation*}
    \Lambda_{\gamma}: H^{1/2}\left(\der\om\right)\to H^{-1/2}\left(\der\om\right)
\end{equation*}
with
\begin{equation*}
    \Lambda_{\gamma}(f):=\gamma{\frac{\der u}{\der \nu}}\in  H^{-1/2}\left(\der\om\right).
\end{equation*}
This class of conductivity inclusions is quite common in applications, like for example in geophysics exploration, where the medium (the earth) under inspection contains heterogeneities in the form of rough bounded subregions (for example subsurface salt bodies) with different conductivity properties \cite{ZK}.

Moreover, polygonal inclusions represent a class in which Lipschitz stable reconstruction from boundary data can be expected \cite{BdHFV}.
In fact, it is well known that the determination of an arbitrary (smooth) conductivity inclusion from the Dirichlet-to-Neumann map is  exponentially ill-posed \cite{DiCR}. On the other hand, restricting the class of admissible inclusions to a compact subset of a finite dimensional space regularizes the inverse problem and allows to establish Lipschitz stability estimates and stable reconstructions (see \cite{BV},\cite{BMPS}, \cite{AS}, \cite{H}).
In order to show our main result we follow a similar approach as the one in \cite{BdHFV} and take advantage of a recent result obtained by the authors in \cite{BFV17} where they prove Fr\'echet differentiability of the  Dirichlet-to-Neumann map with respect to affine movements of vertices of polygons and where they establish an explicit representation formula for the derivative.\\
We would like to mention that our result relies on the knowledge of infinitely many measurements though one expects that finitely many measurements should be enough to determine a polygonal inclusion. In fact, in \cite{BFS} the authors show that if the inclusion is a convex polyhedron, then one suitably assigned current at the boundary of the domain $\Omega$  and the corresponding measured boundary potential  are enough to uniquely determine  the inclusion (see also \cite{S} for the unique determination of an arbitrary polygon from two appropriately chosen pairs of boundary currents and potentials and also \cite{KY} where a convex polygon is uniquely determined in the case of variable conductivities). Unfortunately in the aforementioned papers, the choice of the current fields is quite special and the proof of uniqueness is not constructive. In fact, to our knowledge, no stability result for polygons from few boundary measurements has been derived except for the local stability result obtained in \cite{BFI}. On the other hand, in several applications, like the geophysical one, many measurements are at disposal  justifying the use of the full Dirichlet-to-Neumann map, \cite{BCFLM}.\\
The paper is organized as follows: in Section 2 we state our main assumptions and the main stability result. Section 3 is devoted to the proof of our main result and finally,  Section 4 is devoted to concluding remarks about the results and possible extensions.

\section{Assumptions and main result}
Let $\om\subset\RR^2$ be a bounded open set with $diam(\om)\leq L$.
We denote either by $x=(x_1,x_2)$ and by $P$ a point in $\RR^2$.
We assume that $\der\om$ is of Lipschitz class with constants $r_0$ and $K_0>1$ that means that for every point $P$ in $\der\om$ there exists a coordinate system such that $P=0$ and
\[\om\cap \left([-r_0,r_0]\times[-K_0r_0,K_0r_0]\right)=\left\{(x_1,x_2)\,:\, x_1\in[-r_0,r_0], x_2>\phi(x_1)\right\}\]
for a Lipschitz continuous function $\phi$ with Lipschitz norm smaller than $K_0$.

We denote by $dist(\cdot,\cdot)$ the euclidian distance between points or subsets in $\RR^2$. Later on we will also define the Haussdorff distance $d_H(\cdot,\cdot)$.

Let $\inspol$ the set of closed, simply connected, simple polygons $\mathcal{P}\subset \om$ such that:
\begin{equation}\label{lati}
\mathcal{P}\mbox{ has at most }N_0\mbox{ sides each one with length greater than }d_0;
\end{equation}
\begin{equation}\label{lip}\der\mathcal{P}\mbox{ is of Lipschitz class with constants }r_0\mbox{ and }K_0,\end{equation}
there exists a  constant $\beta_0\in (0,\pi/2]$ such that the angle $\beta$ in each vertex of $\mathcal{P}$ satisfies the conditions
\begin{equation}\label{angoli}
\beta_0\leq\beta\leq 2\pi-\beta_0\mbox{ and } |\beta-\pi|\geq\beta_0,
\end{equation}
and
\begin{equation}\label{distanza}
dist(\mathcal{P},\der\om)\geq d_0.
\end{equation}

Notice that we do not assume convexity of the polygon.

Let us consider the problem
\begin{equation*}
    \left\{\begin{array}{rcl}
             \dive(\gamma\nabla u) & = & 0\mbox{ in }\om, \\
             u&=&\phi\mbox{ on }\der\om,\\
           \end{array}
    \right.
\end{equation*}
where $\phi\in H^{1/2}(\der\om)$ and
\begin{equation}\label{gamma}
\gamma=1+(k-1)\chi_{\mathcal{P}},
\end{equation}
for a given $k>0$, $k\neq 1$ and for $\mathcal{P}\in\inspol$. 
The constants $k$, $r_0$, $K_0$, $L$, $d_0$, $N_0$ and $\beta_0$ will be referred to as the \textit{a priori data}.\\ In the sequel  we will introduce a number of constants depending only on the  \textit{a priori data} that we will always denote by $C$. The values of these constants might differ from one line to the other.

Let us consider the Dirichlet to Neumann map
\[\begin{array}{rcl}\Lambda_\gamma: H^{1/2}(\der\om)&\to& H^{-1/2}(\der\om)\\
\phi&\to&\gamma{\frac{\der u}{\der n}}_{|_{\der\om}},\end{array}\]
whose norm in the space of linear operators $\mathcal{L}(H^{1/2}(\der\om), H^{-1/2}(\der\om))$ is defined by
\[\|\Lambda_\gamma\|_*=\sup\left\{\|\Lambda_\gamma\phi\|_{H^{-1/2}(\der\om)}/\|\phi\|_{H^{1/2}(\der\om)}\,:\,\phi\neq 0\right\}. \]
\begin{teo}\label{mainteo}
Let $\mathcal{P}^0,\mathcal{P}^1\in\inspol$ and let
\[\gamma_0=1+(k-1)\chi_{\mathcal{P}^0}\mbox{ and }\gamma_1=1+(k-1)\chi_{\mathcal{P}^1}.\]
There exist $\ep_0$ and $C$ depending only on the a priori data such that, if
\[\|\Lambda_{\gamma_0}-\Lambda_{\gamma_1}\|_*\leq \ep_0,\]
then $\mathcal{P}^0$ and $\mathcal{P}^1$ have the same number $N$ of vertices $\left\{P_j^0\right\}_{j=1}^N$ and $\left\{P_j^1\right\}_{j=1}^N$ respectively. Moreover, 
\begin{equation}\label{stab1}
d_{H}\left( \partial \mathcal{P}^{0},\partial \mathcal{P}^{1}\right)\leq C
\|\Lambda_{\gamma_0}-\Lambda_{\gamma_1}\|_*\quad \mbox{ for every }j=1,\ldots,N.
\end{equation}
\end{teo}
\vskip 8truemm
\begin{rem}
Observe that our stability estimate is a global one. 
In fact, if $\left \Vert \Lambda _{\gamma_0}-\Lambda _{\gamma_1}\right \Vert
_{\ast }>\varepsilon _{0}$,  since the
following trivial inequality  holds 
\[
d_{H}\left( \partial \mathcal{P}^{0},\partial \mathcal{P}^{1}\right) \leq 2L,
\]
we have trivially%
\begin{equation}
d_{H}\left( \partial \mathcal{P}^{0},\partial \mathcal{P}^{1}\right) \leq
2L\leq 2L\frac{\left \Vert \Lambda _{0}-\Lambda _{1}\right \Vert _{\ast }}{%
\varepsilon _{0}}  \label{stab2}
\end{equation}%
Therefore, in \textit{any case, }by\textit{\ (\ref{stab1}), (\ref{stab2})} we obtain 
the global estimate
\[
d_{H}\left( \partial \mathcal{P}^{0},\partial \mathcal{P}^{1}\right) \leq
\left( C+\frac{2L}{\varepsilon _{0}}\right) \left \Vert \Lambda _{0}-\Lambda
_{1}\right \Vert _{\ast }.
\]

\end{rem}
\section{Proof of the main result}
The proof of Theorem \ref{mainteo} follows partially the strategy used in \cite{BdHFV} in the case of the Helmholtz equation.

The first step of the proof is a rough stability estimate for $\|\gamma_0-\gamma_1\|_{L^2(\om)}$ which is stated in Section \ref{srozza} and which follows from a result by Clop, Faraco and Ruiz \cite{CFR}. Then, in section \ref{sgeo}, we show a rough stability estimate for the Hausdorff distance of the polygons. We also show that if $\|\Lambda_{\gamma_0}-\Lambda_{\gamma_1}\|_*$ is small enough, then the two polygons have the same number of vertices and that the distance from vertices of $\mathcal{P}^0$ and vertices of $\mathcal{P}^1$ is small. For this reason it is possible to define a coefficient $\gamma_t$ that goes smoothly from $\gamma_0$ to $\gamma_1$ and the corresponding Dirichlet to Neumann map.
We prove that the Dirichlet to Neumann map is differentiable (section \ref{sFdiff}), its derivative is continuous (section \ref{sderivcont}) and bounded from below (section \ref{sboundbelow}). These results finally give the Lipschitz stability estimate of Theorem \ref{mainteo}.
\subsection{A logarithmic stability estimate}\label{srozza}
As in \cite{BdHFV}, we can show that, thanks to Lemma 2.2 in \cite{MP}  there exists a constant $\Gamma_0$, depending only on the a priori data, such that, for $i=0,1$,
\begin{equation}\label{hs}
\|\gamma_i\|_{H^s(\om)}\leq \Gamma_0\quad\forall s\in(0,1/2).
\end{equation}
Due to this regularity of the coefficients, we can apply Theorem 1.1 in \cite{CFR} and obtain the following logarithmic stability estimate:
\begin{prop}\label{strozza} There exist $\alpha<1/2$ and $C>1$, depending only on the a priori data, such that
\begin{equation}\label{stimarozzal2}\|\gamma_1-\gamma_0\|_{L^2(\om)}\leq C\left|\log\|\Lambda_{\gamma_0}-\Lambda_{\gamma_1}\|_*\right|^{-\alpha^2/C},\end{equation}
if $\|\Lambda_{\gamma_0}-\Lambda_{\gamma_1}\|_*<1/2$.
\end{prop}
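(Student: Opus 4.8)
The plan is to obtain \eqref{stimarozzal2} as a direct consequence of the logarithmic stability theorem for discontinuous planar conductivities of Clop, Faraco and Ruiz (Theorem 1.1 in \cite{CFR}); the only genuine work is to check that $\gamma_0$ and $\gamma_1$ satisfy the hypotheses of that theorem \emph{uniformly} over the admissible class $\inspol$, with every constant controlled by the a priori data. Two properties must be verified: uniform ellipticity and a uniform bound in a fractional Sobolev space.

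Uniform ellipticity is immediate from the structure \eqref{gamma}: since $\gamma_i$ takes only the values $1$ and $k$, one has $\min\{1,k\}\le\gamma_i\le\max\{1,k\}$ pointwise, an ellipticity bound depending only on $k$. For the regularity I would invoke \eqref{hs}, i.e. $\|\gamma_i\|_{H^s(\om)}\le\Gamma_0$ for every $s\in(0,1/2)$, which encodes the classical fact that the characteristic function of a set with Lipschitz boundary lies in $H^s$ for every $s<1/2$; the uniformity of $\Gamma_0$ over $\inspol$ comes from the uniform geometric control built into \eqref{lati}, \eqref{lip} and \eqref{angoli} (bounded number of sides, minimal side length, uniform Lipschitz constants), which bounds the perimeter of $\mathcal{P}^i$ and the relevant fractional seminorm of $\chi_{\mathcal{P}^i}$. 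I would also record that, by \eqref{distanza}, each $\gamma_i$ equals $1$ in the $d_0$-neighborhood of $\der\om$, so the two conductivities coincide and are smooth near the boundary, which is the normalization under which \cite{CFR} is stated.

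With these hypotheses in hand, Theorem 1.1 of \cite{CFR} applies and yields an estimate of exactly the form \eqref{stimarozzal2}: the $L^2(\om)$ distance of the conductivities is dominated by a negative power of $\left|\log\|\Lambda_{\gamma_0}-\Lambda_{\gamma_1}\|_*\right|$, provided $\|\Lambda_{\gamma_0}-\Lambda_{\gamma_1}\|_*<1/2$ so that the logarithm is bounded away from $0$ and has a definite sign. Fixing once and for all an admissible Sobolev exponent $\alpha<1/2$ and feeding \eqref{hs} with $s=\alpha$ into \cite{CFR}, the exponent produced by that theorem takes the stated shape $\alpha^2/C$, while the multiplicative constant $C$ depends only on $k$, $\Gamma_0$ and $L$, hence only on the a priori data.

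\textbf{Main obstacle.} The inequality itself is essentially a black box, so the delicate point is the bookkeeping of constants. I expect the principal difficulty to be tracking how the exponent and the constant in the \cite{CFR} estimate depend on the Sobolev regularity $\alpha$ and on the ellipticity constant, and confirming that the bound $\Gamma_0$ in \eqref{hs} depends only on the a priori data and not on the individual polygon — this is precisely where \eqref{lati}--\eqref{angoli} must be used quantitatively rather than qualitatively, via the cited Lemma~2.2 in \cite{MP}. A secondary, easier check is that the boundary normalization required by \cite{CFR} is met, which \eqref{distanza} guarantees.
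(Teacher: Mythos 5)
Your proposal matches the paper's argument: the authors likewise derive the uniform bound $\|\gamma_i\|_{H^s(\om)}\le\Gamma_0$ for $s\in(0,1/2)$ from Lemma~2.2 in \cite{MP} (with $\Gamma_0$ controlled by the a priori data) and then apply Theorem~1.1 of \cite{CFR} as a black box to obtain \eqref{stimarozzal2}. Your additional checks on ellipticity and the boundary normalization via \eqref{distanza} are consistent with, and slightly more explicit than, what the paper records.
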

\subsection{A logarithmic stability estimate on distance of vertices}\label{sgeo}
In this section we want to show that, due to the assumptions on polygons in $\inspol$, estimate \eqref{stimarozzal2} yields an estimate on the Hausdorff distance $d_H(\der\mathcal{P}^0,\der\mathcal{P}^1)$ and, as a consequence, on the distance of the vertices of the polygons.

It is immediate to get from \eqref{stimarozzal2} that
\begin{equation}\label{diffsimm}
	\left|\mathcal{P}^0\Delta\mathcal{P}^1\right|\leq \frac{C}{|k-1|}\left|\log\|\Lambda_{\gamma_0}-\Lambda_{\gamma_1}\|_*\right|^{-\alpha^2/C}\end{equation}
Now,
we show that \eqref{diffsimm} implies an estimate on the Hausdorff distance of the boundaries of the polygons.

Let us recall the definition of the Hausdorff distance between two sets $A$ and $B$:
\[d_H(A,B)=max\{\sup_{x\in A}\inf_{y\in B}dist(x,y),\sup_{y\in B}\inf_{x\in A}dist(y,x)\}\]

The following result holds:
 \begin{lm}\label{dadiffahaus}
Given two polygons $\mathcal{P}^0$ and $\mathcal{P}^1$ in $\inspol$, we have
\[d_H(\der\mathcal{P}^0,\der\mathcal{P}^1)\leq C\sqrt{\left|\mathcal{P}^0\Delta\mathcal{P}^1\right|}\]
where $C$ depends only on the a priori data.
\end{lm}
\begin{proof}
Let $d=d_H(\der\mathcal{P}^0,\der\mathcal{P}^1)$. Assume $d>0$ (otherwise the thesis is trivial) and let $x_0\in\der \mathcal{P}^0$ such that $dist(x_0,\der\mathcal{P}^1)=d$. Then,
\[B_d(x_0)\subset \RR^2\setminus\der \mathcal{P}^1.\]
There are two possibilities:\\
(i) $B_d(x_0)\subset \RR^2\setminus\mathcal{P}^1$ or \\
(ii) $B_d(x_0)\subset \mathcal{P}^1$.

In case (i),
$B_d(x_0)\cap \mathcal{P}^0\subset \mathcal{P}^0\setminus\mathcal{P}^1$.
The definition of $\inspol$ implies that, if $d\leq d_0$, there is a constant $C>1$ depending only on the a priori data such that
\[\left|B_d(x_0)\cap \mathcal{P}^0\right|\geq \frac{d^2}{C^2}.\]
If $d\geq d_0$ we trivially have
\[\left|B_d(x_0)\cap \mathcal{P}^0\right|\geq \left|B_{d_0}(x_0)\cap \mathcal{P}^0\right|\geq \frac{d_0^2}{C^2},\]
hence, in any case, for
\[f(d)=\left\{\begin{array}{rl}d^2/C^2&\mbox{ if }d<d_0\\
d_0^2/C^2&\mbox{ if }d\geq d_0\end{array}\right.\]
we have
\[f(d)\leq \left|B_d(x_0)\cap \mathcal{P}^0\right|\leq \left|\mathcal{P}^0\Delta\mathcal{P}^1\right|
.\]
Now, if $\left|\mathcal{P}^0\Delta\mathcal{P}^1\right|<\frac{d_0^2}{C^2}$, then $f(d)=\frac{d^2}{C^2}\leq \left|\mathcal{P}^0\Delta\mathcal{P}^1\right|$ gives $d\leq C\sqrt{\left|\mathcal{P}^0\Delta\mathcal{P}^1\right|}$.
On the other hand, if $\left|\mathcal{P}^0\Delta\mathcal{P}^1\right|\geq \frac{d_0^2}{C^2}$ we have
\[\frac{d^2}{C^2}\leq \frac{L^2}{C^2}\leq \frac{L^2}{C^2}\frac{\left|\mathcal{P}^0\Delta\mathcal{P}^1\right|}{d_0^2/C^2}\]
that gives $d\leq \frac{LC}{d_0}\sqrt{\left|\mathcal{P}^0\Delta\mathcal{P}^1\right|}$.

In case (ii),  $B_d(x_0)\subset \mathcal{P}^1$, hence
\[B_d(x_0)\setminus \mathcal{P}^0\subset \mathcal{P}^1\setminus\mathcal{P}^0\subset\mathcal{P}^1\Delta\mathcal{P}^0.\]
Proceeding as above we have
\[f(d)\leq \left|B_d(x_0)\setminus \mathcal{P}^0\right|\leq \left|\mathcal{P}^0\Delta\mathcal{P}^1\right|\]
and the same conclusion follows.
\end{proof}

\begin{prop}\label{propvertici}
Given the set of polygons $\inspol$ there exist $\delta_0$ and $C$ depending only on the a priori data such that, if for some $\mathcal{P}^0$, $\mathcal{P}^1\in \inspol$ we have
\[d_H(\der\mathcal{P}^0,\der\mathcal{P}^1)\leq \delta_0,\]
then
 $\mathcal{P}^0$ and $\mathcal{P}^1$ have the same number $N$ of vertices $\{P^0_i\}_{i=1}^N$ and $\{P^1_i\}_{i=1}^N$, respectively, that can be ordered in such a way that
\[dist(P^0_i,P^1_i)\leq Cd_H(\der\mathcal{P}^0,\der\mathcal{P}^1) \mbox{ for every }i=1,\ldots,N.\]
\end{prop}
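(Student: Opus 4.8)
The plan is to prove a purely geometric rigidity statement: when the two boundaries are close in Hausdorff distance, the a priori constraints \eqref{lati}, \eqref{lip}, \eqref{angoli} force the two corner patterns to match one-to-one. Write $d=d_H(\der\mathcal{P}^0,\der\mathcal{P}^1)$ and fix once and for all a scale $\rho_0=\tfrac14\min\{d_0,r_0\}$, chosen so that: (i) by \eqref{lati} the ball $B_{\rho_0}(P)$ around any vertex $P$ of either polygon meets $\der\mathcal{P}^j$ in the truncations of exactly the two edges issuing from that vertex and contains no other vertex, since two vertices are at distance $\ge d_0>2\rho_0$; and (ii) by \eqref{lip}, because $\rho_0<r_0$, the set $\der\mathcal{P}^j\cap B_{\rho_0/2}(x)$ is a single Lipschitz arc for every boundary point $x$, hence a single straight segment whenever it contains no vertex.

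\textbf{Key step (corner matching).} First I would show there is $\delta_0>0$, depending only on the a priori data, such that if $d\le\delta_0$ then each vertex $P^0_i$ of $\mathcal{P}^0$ has exactly one vertex of $\mathcal{P}^1$ in $B_{\rho_0}(P^0_i)$. ``At most one'' is immediate from (i). For ``at least one'' I argue by contradiction: if $B_{\rho_0}(P^0_i)$ contained no vertex of $\mathcal{P}^1$, then by (ii) the boundary $\der\mathcal{P}^1$ would be a single segment on a line $L'$ throughout a neighborhood of the point $P'\in\der\mathcal{P}^1$ nearest to $P^0_i$, with $|P'-P^0_i|\le d$. Taking the two points $Y=P^0_i+\tfrac{\rho_0}{4}v_1$ and $Z=P^0_i+\tfrac{\rho_0}{4}v_2$ on the two edges at $P^0_i$ (with $v_1,v_2$ the outgoing edge directions), their nearest points on $\der\mathcal{P}^1$ lie in $B_{\rho_0/2}(P')$, hence on $L'$; together with $P^0_i$ this forces the three points $P^0_i,Y,Z$ to lie within distance $d$ of the single line $L'$, that is, inside a strip of width $2d$. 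But \eqref{angoli} bounds the non-reflex opening angle $\theta$ of the corner by $\beta_0\le\theta\le\pi-\beta_0$, so the triangle $P^0_iYZ$ has minimal width at least $\tfrac{\rho_0}{4}\sin(\beta_0/2)$; hence $2d\ge\tfrac{\rho_0}{4}\sin(\beta_0/2)$, contradicting the choice of $\delta_0$. I expect this rigidity step to be the main obstacle, precisely because it is where the non-degeneracy of the angles \eqref{angoli} and the uniform Lipschitz structure \eqref{lip} must be combined to rule out that a straight piece of one boundary shadows a genuine corner of the other.

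\textbf{Bijection and count.} This defines a map $F\colon P^0_i\mapsto$ the unique vertex of $\mathcal{P}^1$ in $B_{\rho_0}(P^0_i)$. Running the same argument with the roles of $\mathcal{P}^0$ and $\mathcal{P}^1$ exchanged, which is legitimate since the class $\inspol$ and the scale $\rho_0$ are symmetric, gives an analogous map $G$ in the other direction. Because $dist$ is symmetric, $F(P^0_i)\in B_{\rho_0}(P^0_i)$ implies $P^0_i\in B_{\rho_0}(F(P^0_i))$, so by uniqueness $G(F(P^0_i))=P^0_i$; thus $F$ and $G$ are mutually inverse bijections and $\mathcal{P}^0,\mathcal{P}^1$ have the same number $N$ of vertices. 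I then relabel $P^1_i:=F(P^0_i)$.

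\textbf{Lipschitz bound on vertices.} Finally I would upgrade the proximity $|P^1_i-P^0_i|\le\rho_0$ to the sharp bound $Cd$. The vertex $P^1_i$ is the intersection of the two edges of $\mathcal{P}^1$ that lie inside $B_{\rho_0}(P^0_i)$, supported on lines $L_1',L_2'$, one near each edge of $\mathcal{P}^0$ at $P^0_i$ on lines $L_1,L_2$ with $P^0_i=L_1\cap L_2$. Choosing two points a fixed distance of order $\rho_0$ apart on each edge of $\mathcal{P}^0$ and passing to their nearest points on the corresponding edge of $\mathcal{P}^1$, which lie within $d$ of $L_j$ yet still a distance $\ge c\rho_0$ apart, shows that $L_j'$ deviates from $L_j$ by angle $O(d/\rho_0)$ and offset $O(d)$. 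Since \eqref{angoli} keeps the angle between $L_1'$ and $L_2'$ bounded below by $\beta_0/2$ for $\delta_0$ small, the intersection point depends on the two lines in a Lipschitz fashion with constant controlled by $1/\sin(\beta_0/2)$; hence $dist(P^0_i,P^1_i)=|L_1\cap L_2-L_1'\cap L_2'|\le Cd$, with $C$ depending only on the a priori data. Recalling $d=d_H(\der\mathcal{P}^0,\der\mathcal{P}^1)$ yields the assertion.
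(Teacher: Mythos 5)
Your proof is correct, and it shares with the paper the one indispensable rigidity idea --- the angle condition \eqref{angoli} forbids a genuine corner of $\mathcal{P}^0$ from being shadowed, within Hausdorff distance $d$, by a straight piece of $\der\mathcal{P}^1$ --- but it packages the argument differently. The paper works in one pass: it places the vertex $P$ in the $\delta$-neighborhood of the nearest side $l'$ of $\mathcal{P}^1$, intersects the two edges at $P$ with a circle of radius comparable to $\delta/\sin\beta_0$, and shows via explicit $\arcsin$ estimates that $P$ cannot sit in the middle rectangle $R'$ of that neighborhood; this simultaneously yields existence of a nearby vertex of $\mathcal{P}^1$ \emph{and} the linear bound $C\delta$ to an endpoint of $l'$. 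You instead split the work into a coarse matching at a fixed scale $\rho_0$ (your strip-width/triangle-width contradiction, which is a cleaner replacement for the paper's $\arcsin$ computation) followed by a separate refinement step based on Lipschitz dependence of the intersection of two transversal lines on the lines themselves; the paper has no analogue of this second step. Your route costs an extra stage but isolates the quantitative rate in a transparent perturbation lemma, and your bijection argument ($G\circ F=\mathrm{id}$ by uniqueness in balls) is more explicit than the paper's counting via $N\le M\le N$. Two details deserve tightening, though both are at the level of rigor the paper itself permits: the separation ``any two vertices of $\mathcal{P}^j$ are at distance $\ge d_0$'' does not follow from \eqref{lati} alone for non-adjacent vertices --- you need \eqref{lip} (or the paper's implicit fact that $\delta$-neighborhoods of non-adjacent sides are disjoint) to exclude two far-apart-along-the-boundary vertices being metrically close; and in the final step you should say a word on why each edge of $\mathcal{P}^0$ at $P^0_i$ is $d$-close to a single, and distinct, line $L_j'$ (if both edges tracked the same line you would be back in the strip contradiction, so this is quickly dispatched).
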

\begin{proof}
Let us denote by
\[\delta=d_H(\der\mathcal{P}^0,\der\mathcal{P}^1).\]

Assume $\mathcal{P}^0$ has $N$ vertices and that $\mathcal{P}^1$ has $M$ vertices. We now will show that for any vertex $P^0_i\in \der\mathcal{P}^0$ there exists a vertex $P^1_j\in\der\mathcal{P}^1$ such that
$dist(P^0_i,P^1_j)<C\delta$. By assumption \eqref{lati} this implies that $N\leq M$. Interchanging the role of $\mathcal{P}^0$  and $\mathcal{P}^1$ we get that $M\leq N$ which implies that $M=N$.

Let $P$ be one of the vertices in $\der\mathcal{P}^0$ and let us consider the side $l^\prime$ of $\der\mathcal{P}^1$ that is close to $P$. Let us set the coordinate system with origin in the midpoint of $l^\prime$ and let $(\pm l/2,0)$ be the endpoint of $l^\prime$.

By definition of the Hausdorff distance, $P\in \mathcal{U}_\delta=\left\{x\in\RR^2\,:\,dist(x,l^\prime)\leq\delta\right\}$.

Now we want to show that, due to the assumptions on $\inspol$, for sufficiently small $\delta$ there is a constant $C$ such that the distance between $P$ and one of the endpoints of $l^\prime$ is smaller than $C\delta$. The reason is that if $P$ is too far from the endpoints, assumption \eqref{angoli} on $\mathcal{P}^0$ cannot be true.

Let us choose $\delta$ small enough to have:
\begin{equation}\label{cond1}
	\delta<K_0 r_0
\end{equation}
(this guarantees that the $\delta$-neighborhood of each side of $\mathcal{P}^1$ does not intersect the $\delta$-neighborhood of a non adjacent side), and
\begin{equation}\label{cond2}
	\delta<\frac{d_0\sin\beta_0}{16}.
\end{equation}

Notice that, by assumption \eqref{angoli} and by \eqref{cond1}, the rectangle
\[R=\left[-\frac{l}{2}+\frac{2\delta}{\sin\beta_0},\frac{l}{2}-\frac{2\delta}{\sin\beta_0}\right]\times[-\delta,\delta]\]
does not intersect the $\delta$-neighborhood of any other side of $\mathcal{P}^1$.

Let us now show that $P$ cannot be contained in a slightly smaller rectangle
\[R^\prime=\left[-\frac{l}{2}+\lambda,\frac{l}{2}-\lambda\right]\times[-\delta,\delta],\]
where $\lambda=\frac{6\delta}{\sin\beta_0}$.

Let us assume by contradiction that $P\in R^\prime$ and consider the two sides of $\der\mathcal{P}^0$ with an endpoint at $P$. These sides have length greater than $d_0$, hence they intersect $\der B_{\lambda/2}(P)$ in two points $Q_1$ and $Q_2$ in $R$ (because $\lambda/2<\lambda-\frac{2\delta}{\sin\beta_0}$).

Since $\lambda/2>2\delta$ the intersection $\der B_{\lambda/2}(P)\cap R$ is the union of two disjoint arcs. We estimate the angle of $\mathcal{P}^0$ at $P$ in the two alternative cases:\\
(i) $Q_1$ and $Q_2$ are on the same arc or\\
(ii)  $Q_1$ and $Q_2$ are on different arcs.

In case (i), the angle at $P$ is smaller than $\arcsin\left(\frac{4\delta}{\lambda}\right)$
(the angle is smaller than $\arcsin\left(\frac{2(\delta-b)}{\lambda}\right)+\arcsin\left(\frac{2(\delta+b)}{\lambda}\right)$, where $b$ is the $y$-coordinate of $P$,  that is maximum for $b=\pm\delta$).

In order for \eqref{angoli} to be true we should have
\[\arcsin\left(\frac{4\delta}{\lambda}\right)=\arcsin\left(\frac{2}{3}\sin\beta_0\right)\leq \beta_0\]
that is not possible for $\beta_0\in (0,\pi/2)$.

In case (ii), the angle differs from $\pi$ at most by $\arcsin\left(\frac{4\delta}{\lambda}\right)$, which is again too small for \eqref{angoli} to be true.

Since neither of cases (1) and (2) can be true, it is not possibile that $P\in R^\prime$, hence, $P\in \mathcal{U}_\delta\setminus R^\prime$ which implies that there is one of the endpoints of $l^\prime$, let us call it $P^\prime$ such that
\[dist(P,P^\prime)\leq \delta \sqrt{1+\frac{16}{\sin^2\beta_0}}.\]
\end{proof}
\begin{prop}\label{strozzavert}
Under the same assumptions of Theorem \ref{mainteo}, there exist positive constants $\ep_0$, $\alpha$ and $C>1$, depending only on the a priori data, such that, if
\[\ep:=\|\Lambda_{\gamma_0}-\Lambda_{\gamma_1}\|_*<\ep_0,\]
then $\mathcal{P}^0$ and $\mathcal{P}^1$ have the same number $N$ of vertices $\left\{P_j^0\right\}_{j=1}^N$ and $\left\{P_j^1\right\}_{j=1}^N$ respectively. Moreover, the vertices can be order so that
\begin{equation}\label{stimarozzavertici}
	dist\left(P_j^0,P_j^1\right)\leq \omega(\ep) \mbox{ for every }j=1,\ldots,N,
\end{equation}
where $\omega(\ep)=C \left|\log \ep\right|^{-\alpha^2/C}$.
\end{prop}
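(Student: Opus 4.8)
The plan is to \emph{chain} the three results already established in this section, since each link is in place and only the threshold conditions and the bookkeeping of the logarithmic exponent require attention. First I would set $\ep=\|\Lambda_{\gamma_0}-\Lambda_{\gamma_1}\|_*$ and impose $\ep_0<1/2$, so that Proposition \ref{strozza} is applicable; combining it with \eqref{diffsimm} yields, with $\alpha$ and $C$ as in Proposition \ref{strozza},
\[
|\mathcal{P}^0\Delta\mathcal{P}^1|\leq \frac{C}{|k-1|}\,\left|\log\ep\right|^{-\alpha^2/C}.
\]
This converts the spectral smallness of $\ep$ into a quantitative control of the measure of the symmetric difference of the two inclusions.

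Next I would feed this bound into Lemma \ref{dadiffahaus} to pass from the symmetric difference to the Hausdorff distance of the boundaries,
\[
d_H(\der\mathcal{P}^0,\der\mathcal{P}^1)\leq C\sqrt{|\mathcal{P}^0\Delta\mathcal{P}^1|}\leq C'\left|\log\ep\right|^{-\alpha^2/(2C)}.
\]
The only new feature here is that the square root halves the exponent of the logarithmic factor. This is harmless: the modulus $\omega$ in the statement is permitted to carry constants depending only on the a priori data, so one simply renames the constant (replacing $C$ by $2C$ while leaving $\alpha$ unchanged) to recover the declared form $\omega(\ep)=C\left|\log\ep\right|^{-\alpha^2/C}$.

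The decisive quantitative step is then to choose $\ep_0$ small enough that the right-hand side above does not exceed the threshold $\delta_0$ of Proposition \ref{propvertici}; since $\left|\log\ep\right|^{-\alpha^2/(2C)}\to 0$ as $\ep\to 0^+$, such an $\ep_0$, depending only on the a priori data, exists. With $d_H(\der\mathcal{P}^0,\der\mathcal{P}^1)\leq\delta_0$ in force, Proposition \ref{propvertici} directly provides that $\mathcal{P}^0$ and $\mathcal{P}^1$ have the same number $N$ of vertices and, after a suitable ordering,
\[
dist(P_j^0,P_j^1)\leq C\,d_H(\der\mathcal{P}^0,\der\mathcal{P}^1)\leq \omega(\ep),\qquad j=1,\ldots,N.
\]
I do not expect a genuine obstacle at this stage: all the analytic and geometric content already resides in Proposition \ref{strozza} (through the result of \cite{CFR}), in Lemma \ref{dadiffahaus}, and in the angle argument of Proposition \ref{propvertici}. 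The only points demanding care are verifying that the two smallness requirements, $\ep<1/2$ for the logarithmic estimate and $d_H\leq\delta_0$ for the vertex matching, are met simultaneously, and that the resulting constants remain expressible solely in terms of the a priori data $k,r_0,K_0,L,d_0,N_0,\beta_0$.
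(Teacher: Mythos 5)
Your proposal is correct and follows exactly the paper's argument: the paper's proof is the one-line statement that the result "follows by the combination of Proposition \ref{strozza}, Lemma \ref{dadiffahaus} and Proposition \ref{propvertici}", and you have simply spelled out that chaining, including the (harmless) halving of the logarithmic exponent under the square root and the choice of $\ep_0$ to meet the threshold $\delta_0$.
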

\begin{proof}It follows by the combination of Proposition \ref{strozza}, Lemma \ref{dadiffahaus} and Proposition \ref{propvertici}.\end{proof}
\subsection{Definition and differentiability of the function $F$}\label{sFdiff}

Let us denote by $\{P^j_i\}_{i=1}^N$ the vertices of polygon $\mathcal{P}^j$ for $j=0,1$ numbered in such a way that
$dist(P^0_i,P^1_i)\leq \omega(\ep)\mbox{ for }i=1,\ldots,N$,
for $\omega(\ep)$ as in Proposition \ref{strozzavert} and the segment $P^i_jP^i_{j+1}$ is a side of $\mathcal{P}^i$ for $i=0,1$ and $j=1,\ldots,N$.

Let us consider a deformation from $\mathcal{P}^0$ to $\mathcal{P}^1$: for $t\in[0,1]$ let
\[P_i^t=P^0_i+tv_i,\mbox{ where }v_i=P^1_i-P^0_i,\mbox{ for }i=1,\ldots,N\]
and denote by $\mathcal{P}^t$ the polygon  with vertices $P^t_j$ and sides  $P^t_jP^t_{j+1}$.

Let $\gamma_t=1+(k-1)\chi_{\mathcal{P}^t}$ and let $\Lambda_{\gamma_t}$ be the corresponding DtoN map.

As we proved in \cite[Corollary 4.5]{BFV17} the DtoN map $\Lambda_{\gamma_t}$ is differentiable with respect to $t$.

The function
\[F(t,\phi,\psi)=<\Lambda_{\gamma_t}(\phi), \psi>,\]
for $\phi,\psi\in H^{1/2}(\der\om)$,  is a differentiable function from $[0,1]$ to $\RR$ and we can write explicitly its derivative.

Let $u_t,v_t\in H^1(\om)$ be the solutions to
\[
\left\{\begin{array}{rcl}
             \dive(\gamma_t\nabla u_t) & = & 0\mbox{ in }\om, \\
             u_t&=&\phi\mbox{ on }\der\om,\\
           \end{array}
    \right.
\mbox{ and }
\left\{\begin{array}{rcl}
             \dive(\gamma_t\nabla v_t) & = & 0\mbox{ in }\om, \\
             v_t&=&\psi\mbox{ on }\der\om,\\
           \end{array}
    \right.
\]
and denote by $u_t^e$ and $v_t^e$ their the restrictions to $\om\setminus\mathcal{P}^t$ (and by $u_t^i$ and $v_t^i$ their restrictions to $\mathcal{P}^t$).

Let us fix an orthonormal system $(\tau_t, n_t)$ in such a way that $n_t$ represents almost everywhere the outward unit
normal to $\der \mathcal{P}_t$ and the tangent unit vector $\tau_t$ is oriented counterclockwise. Denote
by $M_t$ a $2\times 2$ symmetric matrix valued function defined on $\der\mathcal{P}_t$ with eigenvalues $1$ and $1/k$
and corresponding eigenvectors $\tau_t$ and $n_t$.

Let $\Phi_t^v$ be a map defined on $\der\mathcal{P}_t$, affine on each side of the polygon and such that
\[\Phi_t^v(P_i^t)=v_i\mbox{ for }i=1,\ldots,N.\]

Then, it was proved in \cite[Corollary 2.2]{BFV17} that, for all $t\in[0,1]$,
\[\frac{d}{dt}F(t,\phi,\psi)=(k-1)\int_{\der \mathcal{P}^t}M_t\nabla u_t^e\nabla v_t^e (\Phi_t^v\cdot n_t). \]

\subsection{Continuity at zero of the derivative of $F$}\label{sderivcont}
\begin{lm}\label{lcontder}
There exist constants $C$ and $\beta$, depending only on the a priori data, such that
\begin{equation}\label{contder}\left|\frac{d}{dt}F(t,\phi,\psi)-{\frac{d}{dt}F(t,\phi,\psi)}_{|_{t=0}}	\right|\leq C\|\phi\|_{H^{1/2}(\der\om)}\|\psi\|_{H^{1/2}(\der\om)}|v|^{1+\beta}t^\beta.\end{equation}
\end{lm}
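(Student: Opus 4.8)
The plan is to work directly from the representation of the derivative proved in \cite[Corollary 2.2]{BFV17},
\[\frac{d}{dt}F(t,\phi,\psi)=(k-1)\int_{\der\mathcal{P}^t}M_t\nabla u_t^e\nabla v_t^e\,(\Phi_t^v\cdot n_t),\]
and to compare the integral at a generic $t$ with its value at $t=0$. Since the two integrals live on the different curves $\der\mathcal{P}^t$ and $\der\mathcal{P}^0$, the first step is to transport them onto a common domain. I would introduce the piecewise affine homeomorphism $\Psi_t$ of $\overline\om$ that is the identity near $\der\om$ and maps each side $P_i^0P_{i+1}^0$ of $\mathcal{P}^0$ affinely onto the side $P_i^tP_{i+1}^t$ of $\mathcal{P}^t$. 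Because $P_i^t=P_i^0+tv_i$, the map $\Psi_t$ depends affinely on $t$, and both $\Psi_t-\mathrm{Id}$ and its derivatives are Lipschitz in $t$ with Lipschitz constant controlled by $|v|$. Pulling the integral over $\der\mathcal{P}^t$ back to $\der\mathcal{P}^0$ produces a boundary Jacobian $J_t$, a transported normal $\widetilde{n_t}$ and a transported matrix $\widetilde{M_t}$; at $t=0$ one has $J_0=1$, $\widetilde{n_0}=n_0$, $\widetilde{M_0}=M_0$, and all three are Lipschitz in $t$ with constant $\sim|v|$. A useful simplification is that the deformation field is unchanged by the pullback: since $\Phi_t^v$ is affine on each side with the fixed vertex values $v_i$, its composition with the affine $\Psi_t$ is the affine interpolant of the same values on $\der\mathcal{P}^0$, namely $\Phi_0^v$, which moreover satisfies $|\Phi_0^v|\le C|v|$.

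After this reduction both terms are integrals over the fixed curve $\der\mathcal{P}^0$, and I would expand the difference of integrands
\[\widetilde{M_t}\,\widetilde{\nabla u_t^e}\,\widetilde{\nabla v_t^e}\,J_t\,(\Phi_0^v\cdot\widetilde{n_t})-M_0\,\nabla u_0^e\,\nabla v_0^e\,(\Phi_0^v\cdot n_0)\]
by adding and subtracting intermediate terms. Every term in which only a geometric factor ($\widetilde{M_t}$, $J_t$, or $\widetilde{n_t}$) has moved, the gradients being frozen at $t=0$, is controlled by the Lipschitz-in-$t$ behaviour of that factor times the uniform bound $\|\nabla u_0^e\|\,\|\nabla v_0^e\|\le C\|\phi\|_{H^{1/2}(\der\om)}\|\psi\|_{H^{1/2}(\der\om)}$ on the boundary traces; together with the factor $|\Phi_0^v|\le C|v|$ this gives a contribution bounded by $C\|\phi\|_{H^{1/2}(\der\om)}\|\psi\|_{H^{1/2}(\der\om)}|v|^{2}t$, which is absorbed into the claimed bound because $|v|^{2}t\le|v|^{1+\beta}t^{\beta}$ for $t\in[0,1]$ and $|v|$ small. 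The whole difficulty is therefore concentrated in the terms containing a difference of gradients, of the form
\[\int_{\der\mathcal{P}^0}M_0\bigl[(\widetilde{\nabla u_t^e}-\nabla u_0^e)\,\widetilde{\nabla v_t^e}+\nabla u_0^e\,(\widetilde{\nabla v_t^e}-\nabla v_0^e)\bigr](\Phi_0^v\cdot n_0),\]
for which I need the H\"older-in-$t$ estimate $\|\widetilde{\nabla u_t^e}-\nabla u_0^e\|_{L^2(\der\mathcal{P}^0)}\le C\|\phi\|_{H^{1/2}(\der\om)}(|v|\,t)^{\beta}$, its analogue for $v_t$, and a uniform bound on the complementary factor in a space $L^p(\der\mathcal{P}^0)$ with $p>2$, so that H\"older's inequality closes the product.

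To obtain these gradient estimates I would freeze the domain once and for all by setting $\tilde u_t:=u_t\circ\Psi_t$, which solves $\dive(A_t\nabla\tilde u_t)=0$ in $\om$ with datum $\phi$, where $A_t$ is the push-forward of $\gamma_t$ under $\Psi_t$: it has the fixed discontinuity set $\der\mathcal{P}^0$ and coefficients depending affinely, hence Lipschitzly, on $t$ with constant $\sim|v|$. The difference $w_t:=\tilde u_t-u_0$ solves an equation with source $\dive((A_t-A_0)\nabla u_0)$ and zero boundary data, so the energy estimate yields $\|w_t\|_{H^1(\om)}\le C\|A_t-A_0\|_{L^\infty(\om)}\|\nabla u_0\|_{L^2(\om)}\le C|v|\,t\,\|\phi\|_{H^{1/2}(\der\om)}$. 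This is only a Lipschitz-in-$t$ bound, and only in $H^1$; to reach the traces of the gradients on $\der\mathcal{P}^0$ and convert it into the required H\"older estimate I would interpolate it against the uniform regularity of the transmission problem across a polygonal interface. Away from the vertices the coefficient $A_t$ is piecewise constant across a smooth interface, so $\nabla\tilde u_t$ is H\"older continuous up to $\der\mathcal{P}^0$ with a modulus depending only on the a priori data; near each vertex the angle condition \eqref{angoli} fixes a definite regularity exponent, and this exponent is exactly the $\beta$ of the statement. Interpolating the $H^1$ Lipschitz bound for $w_t$ against this uniform higher regularity produces the boundary estimate $\|\widetilde{\nabla u_t^e}-\nabla u_0^e\|_{L^2(\der\mathcal{P}^0)}\le C\|\phi\|_{H^{1/2}(\der\om)}(|v|\,t)^{\beta}$, and the same corner analysis supplies the uniform $L^p$ bounds on $\widetilde{\nabla u_t^e}$ and $\widetilde{\nabla v_t^e}$. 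Feeding all of this into the displayed product integral bounds it by $C\|\phi\|_{H^{1/2}(\der\om)}\|\psi\|_{H^{1/2}(\der\om)}|v|(|v|\,t)^{\beta}=C\|\phi\|_{H^{1/2}(\der\om)}\|\psi\|_{H^{1/2}(\der\om)}|v|^{1+\beta}t^{\beta}$, which is \eqref{contder}. The main obstacle is precisely this last step: establishing, uniformly in $t$ and with constants depending only on the a priori data, both the H\"older continuity of $\nabla\tilde u_t$ up to the fixed polygonal interface and its H\"older dependence on $t$, the corner singularities governed by \eqref{angoli} being simultaneously the source and the limit of the exponent $\beta$.
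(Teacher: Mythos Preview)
Your argument is correct and amounts to a detailed reconstruction of what the paper simply cites from \cite[Lemma~4.4]{BFV17}: transport both boundary integrals to $\der\Pz$, separate geometric from gradient--difference contributions, and control the latter by combining an $H^1$--in--$t$ continuity estimate for the solutions with uniform piecewise regularity at the polygonal interface, the extra factor of $|v|$ coming from $|\Phi_t^v|\le C|v|$ exactly as the paper notes. The only cosmetic difference is that the paper records the $H^1$ continuity directly in H\"older form, $\|u_t-u_0\|_{H^1(\om)}\le C\|\phi\|_{H^{1/2}(\der\om)}|\Pt\Delta\Pz|^\theta\le C\|\phi\|_{H^{1/2}(\der\om)}(|v|t)^\theta$, whereas you first obtain a Lipschitz-in-$t$ bound after change of variables and then lose down to the exponent $\beta$ through the interpolation with corner regularity; both routes produce the same $|v|^{1+\beta}t^\beta$.
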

\begin{proof}
This result corresponds to Lemma 4.4 in \cite{BFV17}. The dependence on $|v|$ is obtained by refining estimate (3.5) in \cite[Proposition 3.4]{BFV17} to get
\[\|u_t-u_0\|_{H^1(\om)}\leq C\|\phi\|_{H^{1/2}(\der\om)}\left|\mathcal{P}^t\Delta\mathcal{P}^0\right|^\theta\leq C_1\|\phi\|_{H^{1/2}(\der\om)}|v|^\theta t^\theta,\]
and by noticing that
\[\left|\Phi_t^v\right|\leq C|v|.\]
\end{proof}
\subsection{Bound from below for the derivative of $F$}\label{sboundbelow}
In this section we want to obtain a bound from below for the derivative of $F$ at $t=0$.
\begin{prop}\label{p3.3}
There exist a constant $m_1>0$, depending only on the a priori data, and a pair of functions $\tilde{\phi}$ and $\tilde{\psi}$ in $H^{1/2}(\der\om)$ such that
\begin{equation}\label{tesibasso}\left|{\frac{d}{dt}F(t,\tilde{\phi},\tilde{\psi})}_{|_{t=0}}\right|\geq m_1 |v| \|\tilde{\phi}\|_{H^{1/2}(\der\om)}\|\tilde{\psi}\|_{H^{1/2}(\der\om)}. \end{equation}
\end{prop}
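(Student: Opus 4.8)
The plan is to reduce the global lower bound to a purely local computation near a single side of $\mathcal{P}^0$, exploiting the explicit derivative formula
\[
\frac{d}{dt}F(t,\phi,\psi)_{|_{t=0}}=(k-1)\int_{\der\mathcal{P}^0}M_0\nabla u_0^e\cdot\nabla v_0^e\,(\Phi_0^v\cdot n_0).
\]
Since $|v|=\max_i dist(P_i^0,P_i^1)$ is attained at some vertex, I would first localize the argument around that portion of $\der\mathcal{P}^0$ where the normal displacement $\Phi_0^v\cdot n_0$ is genuinely of size comparable to $|v|$. Assumption \eqref{angoli} on the opening angles guarantees that the affine interpolant $\Phi_0^v$, which equals $v_i$ at the vertices, cannot have its normal component vanish identically along a side adjacent to a maximally-displaced vertex; on a subsegment of length bounded below by the a priori data one has $|\Phi_0^v\cdot n_0|\geq c|v|$ for a constant $c$ depending only on $\beta_0,d_0,N_0$.

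The heart of the argument is to construct boundary data $\tilde\phi,\tilde\psi$ that force $M_0\nabla u_0^e\cdot\nabla v_0^e$ to be bounded below, with a definite sign, on precisely that subsegment. The natural choice is to take $\tilde\phi=\tilde\psi$ to be (the trace of) a fixed harmonic-type function whose gradient near the selected side is large, controlled, and nondegenerate — for instance a function mimicking a dipole or a singular solution concentrated near the midpoint of the side, normalized so that $\|\tilde\phi\|_{H^{1/2}(\der\om)}$ is comparable to $1$. Because $M_0$ has eigenvalues $1$ and $1/k$ along $\tau_0$ and $n_0$, the quadratic form $M_0\nabla u_0^e\cdot\nabla v_0^e$ is positive definite, so with $\tilde\psi=\tilde\phi$ the integrand is a genuine square weighted by $\Phi_0^v\cdot n_0$; choosing the localized test function so that $\nabla u_0^e$ does not vanish on the subsegment then yields
\[
\Big|\frac{d}{dt}F(t,\tilde\phi,\tilde\phi)_{|_{t=0}}\Big|\geq |k-1|\,c\,|v|\int_{S} M_0\nabla u_0^e\cdot\nabla u_0^e\geq m_1|v|\,\|\tilde\phi\|^2_{H^{1/2}(\der\om)},
\]
where $S$ is the chosen subsegment. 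The sign of $k-1$ is fixed by the a priori data, so no cancellation across sides can occur once the data is concentrated near a single side.

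The main obstacle, and where I expect the real work to lie, is establishing the quantitative non-degeneracy $\|\nabla u_0^e\|\geq c>0$ on the subsegment $S$ uniformly over the admissible class $\inspol$. This is a lower bound for the gradient of the solution to the transmission problem near a vertex/edge, and it must hold with constants depending only on the a priori data. I would obtain it via an interior-type/boundary regularity estimate for the conductivity equation combined with a unique-continuation or Carleman argument: if $\nabla u_0^e$ were too small on $S$ for every admissible datum, a compactness argument on the (finite-dimensional, compact) class of polygons would produce a limiting configuration with $\nabla u_0^e\equiv 0$ on a set of positive measure of $\der\mathcal{P}^0$, contradicting the strong maximum principle or unique continuation for the exterior solution. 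The delicacy is that the corner singularities permitted by \eqref{angoli} must be accommodated, so the gradient estimate should be phrased away from the vertices, on the interior subsegment $S$ already isolated in the first step, where $u_0^e$ is as smooth as the transmission condition allows.
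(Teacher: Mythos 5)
Your proposal takes a genuinely different route from the paper, and it has a gap at its center: the assertion that ``no cancellation across sides can occur once the data is concentrated near a single side.'' The weight $\Phi_0^v\cdot n_0$ is sign--indefinite along $\der\mathcal{P}^0$, and even if you arrange $|\Phi_0^v\cdot n_0|\geq c|v|$ and $|\nabla u_0^e|\geq c>0$ on your subsegment $S$, a datum $\tilde\phi$ of unit $H^{1/2}(\der\om)$ norm produces a solution whose energy on the remaining sides is in general comparable to its energy on $S$; the contributions of opposite sign can therefore swamp the one you control, and nothing in your construction rules this out. The paper's way around this is precisely to let the solution degenerate: it takes $u_0=v_0=G_0(\cdot,y_r)$, a Green function of the extended domain whose pole $y_r$ approaches the midpoint $Q_i^0$ of a side, so that the local term blows up like $\overline{\alpha}/r$ (estimate \eqref{8.2}) while the contribution of the rest of $\der\mathcal{P}^0$ stays bounded (estimate \eqref{6.2}); the competition is then resolved quantitatively by optimizing in $r$. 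A fixed, normalized datum cannot reproduce this separation of scales, and a pole approaching the boundary is not compatible with keeping $\|\tilde\phi\|_{H^{1/2}(\der\om)}$ of order one, which is why the paper does not construct $\tilde\phi,\tilde\psi$ at all but instead bounds the operator norm $m_0$ of the normalized form $H$ from below and then picks near-maximizers.

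There is a second structural difference. The paper never needs a quantitative lower bound for $\nabla u_0^e$ on a subsegment for admissible data; it argues in the contrapositive. Assuming $m_0$ small, it propagates the smallness of $S_0(y,z)$ from poles in the exterior set $K\subset D_0$ up to the sides of the polygon by a three--spheres chain (Lemma \ref{small}), compares with the $\overline{\alpha}/r$ lower bound, and concludes $|\tilde\Phi_0^v(Q_i^0)\cdot n_0^i|\leq\omega_0(m_0)$ for every side; linearity of $\tilde\Phi_0^v$ on each side transports this to the vertices, and the angle condition \eqref{angoli} then contradicts the normalization of $v$. Your observation that \eqref{angoli} prevents a displacement from being tangent to both sides adjacent to a vertex is correct and is exactly the paper's last step, but your plan to obtain the nondegeneracy of $\nabla u_0^e$ by compactness plus unique continuation would at best yield a nonconstructive constant, must in any case avoid the corner (where the subsegment with $|\Phi_0^v\cdot n_0|\geq c|v|$ naturally sits and where the transmission solution is singular), and, even if granted, would not repair the cancellation problem above. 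You would in effect be driven back to the singular--solutions scheme the paper uses.
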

\begin{proof}
Let us first normalize the length of vector $v$ and introduce
\[H(\phi,\psi)=\int_{\der\mathcal{P}_0} M_o\nabla u_0^e\nabla v_0^e\tilde{\Phi}_0^{v}\cdot n_0,\]
where
\[\tilde{\Phi}_0^{v}=\Phi_0^{v/|v|}.\]
By linearity, we have that ${\frac{d}{dt}F(t,\phi,\psi)}_{|_{t=0}}=|v|H(\phi,\psi)$.

Let $m_0=\|H\|_*=\sup\left\{\frac{H(\phi,\psi)}{\|\phi\|_{H^{1/2}(\der\om)}\|\psi\|_{H^{1/2}(\der\om)}}\,:\,\phi,\psi\neq 0\right\}$ be the operator norm of $H$, so that
\begin{equation}\label{normaH}\left|H(\phi,\psi)\right|\leq m_0\|\phi\|_{H^{1/2}(\der\om)}\|\psi\|_{H^{1/2}(\der\om)}\mbox{ for every }\phi,\psi\in H^{1/2}(\der\om).\end{equation}

Let $\Sigma$ be an open non empty subset of $\der\om$  and let us extend $\om$ to a open domain $\om_0=\om\cup D_0$  that
has Lipschitz boundary with constants $r_0/3$ and $K_0$ and such that $\Sigma$ is contained in $\om_0$ (see \cite{AV} for a detailed construction). Let us extend $\gamma_0$ by $1$ in $D_0$ (and still denote it by $\gamma_0$).

We denote by $G_0(x,y)$ the Green function corresponding to the operator $\dive(\gamma_0 \nabla\cdot)$ and to the domain $\om_0$.
The Green function $G_0(x,y)$ behaves like the fundamental solution of the Laplace equation $\Gamma(x,y)$ for points that are far from the polygon. For points close to the sides of the polygon but far from its vertices, the asymptotic behaviour of the Green function has been described in
\cite[Theorem 4.2]{AV} or \cite[Proposition 3.4]{BF11}: Let $y_r=Q+rn(y_0)$, where $Q$ is a point on $\der\mathcal{P}^0$ whose distance from the vertices of the polygons is greater than $r_0/4$ and $n(y_0)$ is the unit outer normal to $\der\mathcal{P}^0$. Then, for small $r$,
\begin{equation}\label{stimagreen}
	\left\|G_0(\cdot,y_r)-\frac{2}{k+1}\Gamma(\cdot,y_r)\right\|_{H^1(\om_0)}\leq C,
\end{equation}
where $C$ depends only on the a priori data.

Let us take $u_0=G_0(\cdot,y)$ and $v_0=G_0(\cdot,z)$ for $y,z\in K$, where $K$ is a compact subset of $D_0$ such that $dist(K,\der\om)\geq r_0/3$ and $K$ contains a ball of radius $r_0/3$. The functions $u_0$ and $v_0$ are both solutions to the equation $\dive(\gamma_0\nabla\cdot)=0$ in $\om$.

Define the function
\[S_0(y,z)=\int_{\der\mathcal{P}_0}M_0\nabla G_0(\cdot,y)\nabla G_0(\cdot,z)(\tilde{\Phi}_0^v\cdot n_0)\]
that, for fixed $z$, solves $\dive(\gamma_0 \nabla S_0(\cdot,z))=0$ in $\om\setminus\mathcal{P}^0$ and, for fixed $y$ it solves $\dive(\gamma_0 \nabla S_0(y,\cdot))=0$ in $\om\setminus\mathcal{P}^0$.

For $y,z\in K$, $S_0(y,z)=H(u_0,v_0)$, hence, by \eqref{normaH}
\begin{equation}\label{12.1}
|S_0(y,z)|\leq \frac{C_0m_0}{r_0^2}\mbox{ for }y,z\in K,
\end{equation}
where $C_0$ depend on the a priori data.

Moreover, by \eqref{stimagreen}, there exist $\rho_0$ and $E$ depending only on the a priori data such that
\begin{equation}
	\label{13.1}
	|S_0(y,z)|\leq E(d_yd_z)^{-1/2} \mbox{ for every } y,z\in \om\setminus\left(\mathcal{P}^0 \cup_{i=1}^NB_{\rho_0}(P_i^0)\right),
\end{equation}
where $d_y=dist(y,\mathcal{P}^0)$.

Since $S_0$ is small for $y,z\in K$ (see \eqref{12.1} and consider $m_0$ small), bounded for $y,z\in \om\setminus\mathcal{P}^0$ far from the vertices of the polygon, and since it is harmonic in $\om\setminus\mathcal{P}^0$, we can use a three balls inequality on a chain of balls in order to get a smallness estimate close to the sides of the polygon.

To be more specific, let $l_i$ be a side of $\mathcal{P}^0$ with endpoints $P^0_i$ and $P^0_{i+1}$. Let $Q^0_i$ be the midpoint of $l_i$ and let $y_r=Q^0_i+rn_i$ where $n_i$ is the unit outer normal to $\der\mathcal{P}^0$ at $Q^0_i$ and $r\in(0,K_0r_0)$.

\begin{lm}\label{small}
There exist constants $C>1$, $\beta$, and $r_1<r_0/C$ depending only on the a priori data, such that, for $r<r_1$
\begin{equation}
	\label{1.3.1}
	\left|S_0(y_r,y_r)\right|\leq C \left(\frac{\ep_0}{\ep_0+E}\right)^{\beta\tau^2_r}(\ep_0+E)r^{-1},
\end{equation}
where $\ep_0=m_0C_0r_0^{-2}$ and $\tau_r=\frac{1}{\log(1-r/r_1)}$.
\end{lm}
\begin{proof}For the proof of Lemma \ref{small} see \cite[Proposition 4.3]{BF11} where the estimate of $\tau_r$ is slightly more accurate.\end{proof}

Now, we want to estimate $\left|S_0(y_r,y_r)\right|$ from below. In order to accomplish this, let us take $\rho=\min\{d_0/4,r_0/4\}$ and write
\begin{align}\label{6.1}
\left|\frac{S_0(y_r,y_r)}{k-1}\right|\geq
&\left|\int_{\der\mathcal{P}_0\cap B_\rho(Q_i^0)}M_0\nabla G_0(\cdot,y_r)\nabla G_0(\cdot,y_r)(\tilde{\Phi}_0^v\cdot n_0)\right|\\&-\left|\int_{\der\mathcal{P}_0\setminus B_\rho(Q_i^0)}M_0\nabla G_0(\cdot,y_r)\nabla G_0(\cdot,y_r)(\tilde{\Phi}_0^v\cdot n_0)\right|
\\&:=I_1-I_2.\end{align}
The behaviour of the Green function (see \cite{AV}) gives immediately that, for $r<\rho/2$,
\begin{equation}\label{6.2}
I_2\leq C_1,
\end{equation}
for some $C_1$ depending only on the a priori data.

In order to estimate $I_1$, we add and subtract $\Gamma(\cdot,y_r)$ to $G_0(\cdot,y_r)$, then by Young inequality, \eqref{stimagreen}, and by the properties of $M_0$, we get
\begin{equation}\label{I1}
I_1\geq C_2\left|\int_{\der\mathcal{P}_0\cap B_\rho(Q_i^0)}\left|\nabla \Gamma(\cdot,y_r)\right|^2(\tilde{\Phi}_0^v\cdot n_0^i)\right|-C_3,
\end{equation}
where $C_2$ and $C_3$ depend only on the a priori data.

By definition of $\tilde{\Phi}_0^v$ we have
\[\left|\tilde{\Phi}_0^v(x)-\tilde{\Phi}_0^v(Q_i^0)\right|\leq C_4|x-Q^0_i|,\]
so, by adding and subtracting $\Phi^v_0(Q_i^0)$ into the integral of \eqref{I1}, we can write
\begin{align*}
\left|\int_{\der\mathcal{P}_0\cap B_\rho(Q_i^0)}\left|\nabla \Gamma(\cdot,y_r)\right|^2(\tilde{\Phi}_0^v\cdot n_0^i)\right|\geq
& \,\overline{\alpha} \int_{\der\mathcal{P}_0\cap B_\rho(Q_i^0)}\left|\nabla \Gamma(\cdot,y_r)\right|^2\\&-
C_4\int_{\der\mathcal{P}_0\setminus B_\rho(Q_i^0)}\left|\nabla \Gamma(\cdot,y_r)\right|^2|x-Q^0_i|,
\end{align*}
where $\overline{\alpha}=|\tilde{\Phi}_0^v(Q_i^0)\cdot n_0^i|$.
By straightforward calculations one can see that
\begin{equation}\label{8.2}
	 \int_{\der\mathcal{P}_0\cap B_\rho(Q_i^0)}\left|\nabla \Gamma(\cdot,y_r)\right|^2\geq \frac{C_5}{r}
\end{equation}
and
\begin{equation}\label{9.1}
	\int_{\der\mathcal{P}_0\setminus B_\rho(Q_i^0)}\left|\nabla \Gamma(\cdot,y_r)\right|^2|x-Q^0_i| \leq C_6\left|\log (\rho/r)\right|.
\end{equation}
By putting together \eqref{6.1}, \eqref{6.2}, \eqref{8.2} and \eqref{9.1}, we get
\begin{equation}\label{9.1star}
	\left|S_0(y_r,y_r)\right|\geq  \frac{C_6 \overline{\alpha}}{r}-C_7\left|\log (\rho/r)\right|-C_8.\end{equation}
 By comparing \eqref{1.3.1} and \eqref{9.1star} we get
\begin{equation}\label{9.2}
C_6\overline{\alpha}\leq C \left(\frac{\ep_0}{\ep_0+E}\right)^{\beta\tau^2_r}(\ep_0+E) +C_7r|\log(\rho/r)|+C_8r. 	
\end{equation}
By an easy calculation one can see that $\beta\tau_r^2\geq r^2/C_9$, hence
\begin{equation}\label{9.3}
C_6\overline{\alpha}\leq C \left(\frac{\ep_0}{\ep_0+E}\right)^{r^2/C_9}(\ep_0+E) +C_{10}\sqrt{r}. 	
\end{equation}
By choosing $r=\left|\log\left(\frac{\ep_0}{\ep_0+E}\right)\right|^{-1/4}$ and recalling that $\ep_0=C_0m_0r_0^{-2}$ we have
\[|\tilde{\Phi}_0^v(Q_i^0)\cdot n_0^i|=\overline{\alpha}\leq \omega_0(m_0),\]
where $\omega_0(t)$ is an increasing concave function such that $\lim_{t\to 0^+}\omega_0(t)=0$.

This estimate can also be obtained for $\tilde{\Phi}_0^v(y)\cdot n_0^i$ for every $y\in B_{\rho}(Q_i^0)\cap l_i$. Since $\tilde{\Phi}_0^v$ is linear on the bounded side $l_i$,
\[|\tilde{\Phi}_0^v(y)\cdot n_0^i|\leq\omega_0(m_0)\quad\quad \mbox{for every }y\in l_i,\]
and, in particular
\begin{equation}\label{vec1}\left|\frac{v_i}{|v|}\cdot n_0^i\right|=|\tilde{\Phi}_0^v(P_i)\cdot n_0^i|\leq\omega_0(m_0)\end{equation}
Repeating the same argument on the adjacent side,  $l_{i+1}$, containing $P_i$  we obtain in particular that  
\begin{equation}\label{vec2}
\left|\frac{v_i}{|v|}\cdot n_0^{i+1}\right|=|\tilde{\Phi}_0^v(P_i)\cdot n_0^{i+1}|\leq\omega_0(m_0)\end{equation}
Then,  there exists a constant $C>0$ depending on the a priori constants only such that 
\[\left|\frac{v_i}{|v|}\right|\leq C\omega_0(m_0)\]
and since one can apply the same procedure on each side of the polygon we have 
\[\left|\frac{v_i}{|v|}\right|\leq C\omega_0(m_0)\mbox{ for }i=1,\ldots,N\]
that yields
\[1\leq NC\omega_0(m_0)\Rightarrow m_0\geq \omega_0^{-1}(1/CN).\]
By definition of the operator norm of $H$, there exist $\tilde{\phi}$ and $\tilde{\psi}$ in $H^{1/2}(\der\om)$ such that
\[|H(\tilde{\phi},\tilde{\psi})|\geq \frac{m_0}{2}\|\tilde{\phi}\|_{H^{1/2}(\der\om)}\|\tilde{\psi}\|_{H^{1/2}(\der\om)}\]
and \eqref{tesibasso} is true for $m_1=\frac{\omega_0^{-1}(1/CN)}{2}$. 

\end{proof}
\begin{rem}
Note that the lower bound for the derivative of $F$ in Proposition \ref{p3.3} holds  for functions $\tilde{\phi}$ and $\tilde{\psi}$ with compact support on an open portion of $\partial\Omega$.
\end{rem}

\subsection{Lipschitz stability estimate}
In this section we conclude the proof of Theorem \ref{mainteo}.
Let $\tilde{\phi}$ and $\tilde{\psi}$ the functions the satisfy \eqref{tesibasso} in Proposition \ref{p3.3}.

By \eqref{tesibasso} and by \eqref{contder} we have

\begin{eqnarray*}\label{fine}
	 \left|<\left(\Lambda_{\gamma_1}-\Lambda_{\gamma_0}\right)(\tilde{\phi}),\tilde{\psi}>\right|\!\!&=\!\!&\left|F(1,\tilde{\phi},\tilde{\psi})-F(0,\tilde{\phi},\tilde{\psi})\right|=\left|\int_0^1\frac{d}{dt}F(t,\tilde{\phi},\tilde{\psi})dt\right|\\
\!\!	&\geq\!\!& \frac{d}{dt}F(t,\tilde{\phi},\tilde{\psi})_{|_{t=0}} \!-\!\int_0^1\!\left| \frac{d}{dt}F(t,\tilde{\phi},\tilde{\psi})-\frac{d}{dt}F(t,\tilde{\phi},\tilde{\psi})_{|_{t=0}}\right|dt\\
	&\geq&\left(m_1-C|v|^\beta\right)|v|\|\tilde{\phi}\|_{H^{1/2}(\der\om)}\|\tilde{\psi}\|_{H^{1/2}(\der\om)},
\end{eqnarray*}
that implies
\begin{equation}\label{fine2}
	\ep=\|\Lambda_{\gamma_0}-\Lambda_{\gamma_1}\|_*\geq \left(m_1-C|v|^\beta\right)|v|.
\end{equation}
From \eqref{stimarozzavertici}, since
$|v|\leq N\max_jdist(P^0_j,P^1_j)$ it follows that there exists $\ep_0>0$ such that, if
\[\|\Lambda_{\gamma_0}-\Lambda_{\gamma_1}\|_*\leq\ep_0,\] then
\[\left(m_1-C|v|^\beta\right)\geq m_1/2\]
and
\[|v|\leq \frac{2}{m_1}\|\Lambda_{\gamma_0}-\Lambda_{\gamma_1}\|_*.\]
Finally, since 
\[d_{H}\left( \partial \mathcal{P}^{0},\partial \mathcal{P}^{1}\right)\leq C| v|\]
 the claim follows.
\qed

Finally, as a byproduct of Theorem \ref{mainteo} and of Proposition \ref{propvertici} we have the following

\begin{coro}
Let $\mathcal{P}^0,\mathcal{P}^1\in\inspol$ and let
\[\gamma_0=1+(k-1)\chi_{\mathcal{P}^0}\mbox{ and }\gamma_1=1+(k-1)\chi_{\mathcal{P}^1}.\]
There exist $\ep_0$ and $C$ depending only on the a priori data such that, if
\[\|\Lambda_{\gamma_0}-\Lambda_{\gamma_1}\|_*\leq \ep_0,\]
then $\mathcal{P}^0$ and $\mathcal{P}^1$ have the same number $N$ of vertices $\left\{P_j^0\right\}_{j=1}^N$ and $\left\{P_j^1\right\}_{j=1}^N$ respectively. Moreover, the vertices can be ordered so that
\begin{equation}
dist\left(P_j^0,P_j^1\right)\leq C
\|\Lambda_{\gamma_0}-\Lambda_{\gamma_1}\|_*\quad \mbox{ for every }j=1,\ldots,N.
\end{equation}
\end{coro}

\section{Final remarks and extensions}
We have derived Lipschitz stability estimates for polygonal conductivity inclusions in terms of the Dirichlet-to-Neumann map using differentiability properties of the Dirichlet-to-Neuman map. \\
The result extends also to the case where finitely many conductivity polygonal inclusions are contained in the domain $\Omega$ assuming that they are at controlled distance one from the other and from the boundary of $\Omega$.\\
We expect that the same result holds also when having at disposal local data. In fact, as we observed at the end of Proposition 3.6, the lower bound for the derivative of $F$ is obtained using solutions with compact support in a open subset of $\partial\Omega$ and a rough stability estimate of the Hausdorff distance of polygons in terms of the local Dirichlet-to-Neumann map could be easily derived following the ideas contained in \cite{MR}.\\
Finally, it is relevant for the geophysical application we have in mind to extend the results of stability and reconstruction to the 3-D setting possibly considering  an inhomogeneous and/or anisotropic medium. This case is not at all straightforward since differentiability properties of the Dirichlet-to-Neumann map in this case are not known.

\bigskip

\textbf{Acknowledgment}

The paper was partially supported by GNAMPA - INdAM.

\end{document}